\documentclass[11pt]{article}

\usepackage[T1]{fontenc}
\usepackage[utf8]{inputenc}
\usepackage{lmodern}
\usepackage[margin=1in]{geometry}
\usepackage{microtype}

\usepackage{amsmath,amssymb,amsthm,mathtools}
\usepackage{bm}

\usepackage{graphicx}
\graphicspath{{figures/}}
\usepackage{booktabs}
\usepackage{array}

\usepackage[shortlabels]{enumitem}
\usepackage[numbers,sort&compress]{natbib}
\usepackage{xcolor}
\usepackage[colorlinks=true,linkcolor=blue,citecolor=blue,urlcolor=blue]{hyperref}
\usepackage[capitalise,noabbrev]{cleveref}

\theoremstyle{plain}
\newtheorem{theorem}{Theorem}[section]
\newtheorem{lemma}[theorem]{Lemma}
\newtheorem{proposition}[theorem]{Proposition}
\newtheorem{corollary}[theorem]{Corollary}
\newtheorem{problem}[theorem]{Problem}
\crefname{problem}{Problem}{Problems}
\Crefname{problem}{Problem}{Problems}

\theoremstyle{definition}

\theoremstyle{remark}

\newcommand{\R}{\mathbb{R}}
\newcommand{\C}{\mathbb{C}}

\newcommand{\E}{\mathbb{E}}
\newcommand{\Pp}{\mathbb{P}}
\newcommand{\F}{\mathbb{F}}
\newcommand{\cD}{\mathcal{D}}
\newcommand{\cP}{\mathcal{P}}
\newcommand{\ind}[1]{\mathbf{1}\!\left\{#1\right\}}
\newcommand{\eas}{e.a.s.}
\DeclareMathOperator{\diag}{diag}
\DeclareMathOperator{\dist}{dist}
\DeclareMathOperator{\sgn}{sgn}

\title{Predicting Diagonalizability of a Mean Matrix}
\author{Jinze Zhao\\University of California, San Diego\\\texttt{jiz419@ucsd.edu}}
\date{}

\begin{document}

\maketitle

\begin{abstract}
Wu and Santhanam \cite{wu2024full} asked whether one can determine, from an increasing i.i.d. sample of binary random matrices, whether the unknown mean matrix is diagonalizable while making only finitely many errors almost surely. We answer this question affirmatively, for diagonalizability over either $\R$ or $\C$. The main observation is a general principle: every semialgebraic property of a fixed-dimensional bounded mean parameter is eventually almost surely predictable. We give a self-contained shrinking-confidence-set proof and an explicit predictor obtained from polynomial sign tests. Tarski--Seidenberg quantifier elimination shows that both the real- and complex-diagonalizable loci are semialgebraic, despite being neither closed nor open. We further extend the positive result to unbounded observations with any fixed finite moment of order $r>1$, using the Marcinkiewicz--Zygmund strong law. Combined with the Dembo--Peres topological criterion, this yields a sharp contrast: over the class of all merely integrable matrix laws, diagonalizability is not eventually almost surely predictable when the dimension is at least two. The construction is effective for fixed dimension, although no practical complexity bound is claimed.

\end{abstract}


\section{Introduction}
\label{sec:introduction}

Suppose that $X_1,X_2,\ldots$ are independent copies of a random $d\times d$ binary matrix $X$, and let
\[
    M=\E X
\]
denote the entrywise mean. At time $n$, a learner has seen only $X_1,\ldots,X_{n-1}$ and must predict a property of $M$. The learner succeeds in the eventual-almost-sure sense if, under every distribution in the model class, only finitely many of these predictions are wrong with probability one. This is the unsupervised prediction framework studied by \citet{wu2021prediction}.

Wu and Santhanam proved eventual-almost-sure predictability for several matrix properties, including singularity, rank, and repeated eigenvalues. They then posed the following question.

\begin{problem}[Wu--Santhanam, AISTATS 2021, Problem 1]
\label{prob:ws}
Is determining whether a matrix is diagonalizable \eas-predictable?
\end{problem}

The field of diagonalization is not specified in the original statement. Since $M$ is real, the two natural interpretations are similarity to a diagonal matrix over $\R$ and over $\C$. We resolve both.

The question is not an immediate application of the closed-set result in \citet{wu2021prediction}. The diagonalizable locus is neither closed nor open. For example, for $\varepsilon>0$,
\[
 A_\varepsilon=
 \begin{pmatrix}0&1\\0&\varepsilon\end{pmatrix}
 \longrightarrow
 \begin{pmatrix}0&1\\0&0\end{pmatrix},
\]
where every $A_\varepsilon$ has two distinct real eigenvalues but the limit is defective. Conversely,
\[
 \begin{pmatrix}0&t\\0&0\end{pmatrix}\longrightarrow 0
 \qquad (t\downarrow0),
\]
where the matrices on the left are defective and the limit is diagonalizable. Thus neither the property nor its complement can be handled as one closed hypothesis.

Our resolution uses the finite Boolean closure already implicit in the Wu--Santhanam framework, together with real algebraic geometry. The main contributions are as follows.

\begin{enumerate}[(i)]
    \item We prove that every semialgebraic property of a fixed-dimensional bounded mean vector is \eas-predictable. The proof gives a concrete shrinking-confidence-set predictor and is independent of the abstract prediction characterization.
    \item We prove that the loci of real matrices diagonalizable over $\R$ and over $\C$ are semialgebraic. The proof writes diagonalizability as an existential polynomial system and applies the Tarski--Seidenberg theorem.
    \item Combining these statements answers \cref{prob:ws} affirmatively. Dependence among entries of a single random matrix is allowed; only the matrices themselves are i.i.d.
    \item We extend the semialgebraic principle to matrix observations having any fixed finite $r$th moment with $r>1$. For $d\geq2$, the Dembo--Peres criterion shows that the corresponding assertion fails over the unrestricted class of laws having only a finite first moment. This yields a sharp moment dichotomy beyond the bounded model of the open problem.
\end{enumerate}

The underlying algebraic observation is short, but it applies well beyond diagonalizability: any fixed-dimensional mean property expressible by a first-order formula over the ordered real field is covered. Quantifier elimination makes the resulting predictor effective in principle; its worst-case computational cost may be prohibitive, and computational efficiency is not part of the claim.

\paragraph{Status of the question.}
The published AISTATS paper states \cref{prob:ws} in 2021, and the authors' substantially revised 2024 manuscript continues to list the same diagonalizability question as open \citep{wu2024full}. Searches by exact wording, title, citations, and the terms ``semialgebraic'' and ``Tarski--Seidenberg'' did not locate a prior resolution through August 11, 2026. This is evidence rather than a proof of novelty; author contact and database-level citation checks are recommended before submission.

\section{Model and main results}
\label{sec:model}

We first isolate the mean-prediction problem from its matrix specialization. Let $Y_1,Y_2,\ldots$ be i.i.d. random vectors in $\R^m$ with mean $\theta=\E Y_1$. For a parameter set $S\subseteq\R^m$, a predictor is a sequence of Borel maps
\[
    \Phi_N:(\R^m)^N\longrightarrow\{0,1\}.
\]
It is eventually almost surely correct for $S$ over a class of laws $\cP$ if, for every $P\in\cP$,
\begin{equation}
\label{eq:eas}
 P\!\left(
   \Phi_N(Y_1,\ldots,Y_N)=\ind{\theta\in S}
   \text{ for all sufficiently large }N
 \right)=1.
\end{equation}
Equivalently, the sum of the binary prediction errors is finite almost surely. The original protocol predicts at time $n$ from the first $n-1$ observations; replacing $N$ by $n-1$ gives exactly that convention.

Recall that a subset of $\R^m$ is \emph{semialgebraic} if it can be obtained from finitely many polynomial equalities and inequalities by finitely many unions, intersections, and complements. Equivalently, it is definable by a quantifier-free first-order formula over the ordered real field.

\begin{theorem}[Semialgebraic mean-property principle]
\label{thm:semialgebraic}
Let $m$ be fixed and let $S\subseteq\R^m$ be semialgebraic.
\begin{enumerate}[(a)]
    \item If the i.i.d. observations are supported on $[0,1]^m$, then membership of $\theta=\E Y_1$ in $S$ is \eas-predictable.
    \item Fix $r>1$. Over the class of all i.i.d. laws on $\R^m$ satisfying $\E\|Y_1\|_2^r<\infty$, membership of $\theta$ in $S$ is \eas-predictable.
\end{enumerate}
\end{theorem}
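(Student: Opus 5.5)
The plan is to reduce everything to a single polynomial sign test with a shrinking threshold, and then close under Boolean operations. Any semialgebraic $S$ can be written as a finite Boolean combination of atomic sets $\{x: p(x)>0\}$ and $\{x:p(x)=0\}$ with $p$ polynomial. Suppose each indicator $\ind{\theta\in A_i}$ of finitely many atoms $A_1,\ldots,A_k$ has an \eas-correct predictor $\Phi^{(i)}_N$. Apply the same Boolean combination to these outputs. Almost surely, outside a finite set of indices $N$ (a finite union of finite sets), all $k$ atomic predictions are correct, so the combined prediction is correct eventually. It therefore suffices to predict the sign of $p(\theta)$, i.e. which of $p(\theta)>0$, $p(\theta)=0$, $p(\theta)<0$ holds, for a single polynomial $p$.

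For the sign test we use the empirical mean $\bar Y_N$ and a deterministic rate $\varepsilon_N\downarrow0$ such that $\|\bar Y_N-\theta\|_2\le\varepsilon_N$ eventually almost surely. The predictor outputs
\[
\text{``}p(\theta)=0\text{''}\ \text{if}\ |p(\bar Y_N)|\le L\,\delta_N,\qquad\text{and otherwise }\sgn p(\bar Y_N),
\]
with $\delta_N$ chosen below.
\begin{itemize}
\item[] \emph{Part (a).} Coordinates lie in $[0,1]$, so Hoeffding's inequality with a union bound over $m$ coordinates and Borel--Cantelli give $\varepsilon_N=\sqrt{3m\log N/N}$. All points stay in a compact neighbourhood of $[0,1]^m$, where $p$ is Lipschitz with some constant $L$. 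Take $\delta_N=\sqrt{\varepsilon_N}$, so that $\varepsilon_N=o(\delta_N)$ and $\delta_N\to0$.
\item[] \emph{If $p(\theta)=0$:} then eventually $|p(\bar Y_N)|\le L\varepsilon_N\le L\delta_N$, and the predictor outputs zero.
\item[] \emph{If $p(\theta)=c\neq0$:} then $p(\bar Y_N)\to c$ while $L\delta_N\to0$, so eventually the sign is reported correctly.
\end{itemize}
No uniformity over laws is needed, since \eas-correctness is pointwise in $P$.

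\emph{Part (b).} Here the only change is the concentration rate. Hoeffding is unavailable, and I would instead invoke the Marcinkiewicz--Zygmund strong law. For $1<r<2$ and i.i.d. centered variables with finite $r$th moment, $N^{-1}\sum(Y_i-\theta)=o(N^{1/r-1})$ almost surely. For $r\ge2$ the same conclusion holds with any exponent in $(1,2)$, since a finite $r$th moment implies a finite $r'$th moment for $r'<r$. So, fixing $s=\min(r,3/2)$, we may take $\varepsilon_N=N^{1/s-1}$, which holds eventually almost surely coordinatewise and hence in norm. The threshold then works the same way as in (a):
\begin{itemize}
\item[] We need $\delta_N$ to dominate the local Lipschitz error. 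Since $\bar Y_N\to\theta$ almost surely by the strong law, eventually $\bar Y_N$ lies in the unit ball around $\theta$. On that ball $p$ is Lipschitz with a constant $L_\theta$ that is unknown but finite.
\item[] To avoid needing $L_\theta$, use the threshold $\delta_N=\sqrt{\varepsilon_N}$ with no constant in front. Since $L_\theta\varepsilon_N=o(\delta_N)$, the zero case is detected eventually, and the nonzero cases behave as before.
\end{itemize}
The same unknown-constant trick also removes the need for $L$ in (a).

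The main obstacle is the first reduction's interaction with the rate. The threshold is data-independent, and its correctness relies on $\varepsilon_N=o(\delta_N)$ holding eventually almost surely under every law in the class. That is exactly what the moment hypothesis buys. With only a first moment, Marcinkiewicz--Zygmund gives no deterministic rate, and indeed the abstract indicates failure there. So the care goes into stating the MZ law with the right exponent and checking that a finite $r$th moment of $\|Y_1\|_2$ gives it for each coordinate. The Boolean reduction itself is routine, provided that "semialgebraic" is taken in the quantifier-free finite-combination form recalled before the theorem.
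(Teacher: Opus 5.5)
Your proof is correct and follows the paper's structure: Boolean closure over atomic tests, with Hoeffding plus Borel--Cantelli in (a) and the Marcinkiewicz--Zygmund strong law in (b) supplying a deterministic almost-sure rate, and your atomic test is essentially the polynomial-sign estimator of \cref{sec:explicit} rather than the distance-to-closed-set test of \cref{lem:closed} used in the main proof. The one real refinement is your constant-free threshold $\delta_N=\sqrt{\varepsilon_N}$. It absorbs the unknown local Lipschitz constant, so the sign test also works for the unbounded data in (b), where the paper instead reverts to distance tests with radius $\sqrt m\,N^{-\gamma}$.
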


For the matrix statement, write $\cD_{\F,d}\subseteq\R^{d\times d}$ for the real matrices that are diagonalizable over $\F\in\{\R,\C\}$.

\begin{theorem}[Resolution of \cref{prob:ws}]
\label{thm:diagonalizable}
Fix $d\geq1$ and $\F\in\{\R,\C\}$. Let $X_1,X_2,\ldots$ be i.i.d. random $d\times d$ matrices supported on $[0,1]^{d\times d}$, and put $M=\E X_1$. There are Borel prediction rules $\Phi_N$, based on $X_1,\ldots,X_N$, such that for every underlying law,
\[
 \Pp\!\left(
   \Phi_N=\ind{M\in\cD_{\F,d}}
   \text{ for all sufficiently large }N
 \right)=1.
\]
In particular, this holds for the binary random matrices of \citet{wu2021prediction} and answers their Problem 1 affirmatively over either field.
\end{theorem}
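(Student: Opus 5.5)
The plan is to derive \cref{thm:diagonalizable} from \cref{thm:semialgebraic}(a), taking $m=d^2$ and identifying $\R^{d\times d}$ with $\R^{m}$ by listing entries. Then $Y_i=\operatorname{vec}(X_i)$ is i.i.d., supported on $[0,1]^m$, and has mean $\theta=\operatorname{vec}(M)$. Dependence among the entries of a single $X_i$ is harmless, since \cref{thm:semialgebraic} only requires the vectors $Y_i$ to be i.i.d. What remains is to show that $\cD_{\F,d}$ is a semialgebraic subset of $\R^{d^2}$ for $\F\in\{\R,\C\}$. Once that is done, the predictor from \cref{thm:semialgebraic}(a) applied to $S=\operatorname{vec}(\cD_{\F,d})$ gives the rules $\Phi_N$, which are Borel. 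Binary matrices are a special case of laws on $[0,1]^{d\times d}$, so the final sentence of the theorem follows.

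For $\F=\R$, I would write
\[
M\in\cD_{\R,d}\iff \exists P\in\R^{d\times d},\ \exists \lambda\in\R^d:\ \det P\neq 0,\ MP=P\diag(\lambda).
\]
This is a first-order formula over the ordered real field. The conditions are polynomial in the entries of $M,P,\lambda$, and $\det P\neq0$ is a polynomial inequation. Equivalently, the set is the coordinate projection of a semialgebraic subset of $\R^{d^2+d^2+d}$. Tarski--Seidenberg says projections of semialgebraic sets are semialgebraic, so $\cD_{\R,d}$ is semialgebraic.

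For $\F=\C$, I would write $P=A+iB$ and $\lambda=\alpha+i\beta$ with $A,B,\alpha,\beta$ real. The equation $MP=P\diag(\lambda)$ then splits into its real and imaginary parts, which are real polynomial equations. The condition $\det(A+iB)\neq0$ is equivalent to $(\operatorname{Re}\det)^2+(\operatorname{Im}\det)^2>0$, and both $\operatorname{Re}\det$ and $\operatorname{Im}\det$ are real polynomials in $A,B$. Projecting out $A,B,\alpha,\beta$ again gives a semialgebraic set by Tarski--Seidenberg.

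As a cross-check, or as a quantifier-free alternative, one could use the criterion that $M$ is $\C$-diagonalizable if and only if its minimal polynomial is squarefree. That condition can be expressed through rank conditions, which are polynomial in $M$, but the existential formulation above is simpler and is the one I would use.

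I expect the only real subtlety to be making the complex case fully rigorous: the real encoding must capture exactly the matrices diagonalizable over $\C$, and invertibility must be encoded correctly. The remainder is a direct invocation of \cref{thm:semialgebraic}(a) and Tarski--Seidenberg.
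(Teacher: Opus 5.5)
Your proposal is correct and follows essentially the same route as the paper. The paper also vectorizes to $m=d^2$ and shows $\cD_{\F,d}$ is semialgebraic by projecting the certificate $MP=P\diag(\lambda)$ with $P$ invertible. It uses the same real/imaginary splitting and $(\Re\det)^2+(\Im\det)^2>0$ over $\C$, applies Tarski--Seidenberg, and then invokes \cref{thm:semialgebraic}(a). Your $\det P\neq0$ in place of the paper's $(\det P)^2>0$ defines the same semialgebraic set, so that difference is immaterial.
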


The proof of \cref{thm:semialgebraic,thm:diagonalizable} is divided into two independent ingredients: \cref{sec:prediction} gives the statistical prediction lemma, while \cref{sec:algebraic} proves the required semialgebraicity.

\section{Predicting finite Boolean combinations of closed sets}
\label{sec:prediction}

The key statistical fact is elementary. A confidence region that eventually contains the true mean can test any fixed closed set. Finitely many such tests may then be combined without losing eventual correctness.

\subsection{A shrinking confidence ball}

Assume first that $Y_i\in[0,1]^m$ almost surely and set
\[
    \widehat\theta_N=\frac1N\sum_{i=1}^N Y_i,
    \qquad
    a_N=N^{-1/4},
    \qquad
    \rho_N=\sqrt m\,a_N.
\]

\begin{lemma}[Eventual confidence]
\label{lem:confidence}
For every law supported on $[0,1]^m$,
\[
 \Pp\!\left(
   \|\widehat\theta_N-\theta\|_2\leq\rho_N
   \text{ for all sufficiently large }N
 \right)=1.
\]
\end{lemma}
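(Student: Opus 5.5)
The plan is to reduce the vector statement to coordinatewise deviation bounds, control each coordinate with Hoeffding's inequality, and finish with Borel--Cantelli.

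Since $\rho_N=\sqrt m\,a_N$, the inclusion
\[
\bigl\{\|\widehat\theta_N-\theta\|_2>\rho_N\bigr\}\subseteq\bigcup_{j=1}^m\bigl\{|\widehat\theta_{N,j}-\theta_j|>a_N\bigr\}
\]
holds. Indeed, if every coordinate deviation is at most $a_N$, then the Euclidean norm is at most $\sqrt{m}\,a_N$. It therefore suffices to bound each coordinate event separately.

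For each fixed $j$, the variables $Y_{1,j},\ldots,Y_{N,j}$ are i.i.d. and take values in $[0,1]$. Only independence across $i$ is needed here; dependence among the coordinates of a single $Y_i$ is harmless. Hoeffding's inequality gives
\[
\Pp\bigl(|\widehat\theta_{N,j}-\theta_j|>a_N\bigr)\leq 2\exp(-2Na_N^2)=2\exp(-2N^{1/2}).
\]
A union bound over the $m$ coordinates then yields
\[
\Pp\bigl(\|\widehat\theta_N-\theta\|_2>\rho_N\bigr)\leq 2m\,e^{-2\sqrt N}.
\]

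The series $\sum_N e^{-2\sqrt N}$ converges, for example by comparison with $\sum_N N^{-2}$, since $e^{-2\sqrt N}\leq C N^{-2}$. By the first Borel--Cantelli lemma, only finitely many of the events $\{\|\widehat\theta_N-\theta\|_2>\rho_N\}$ occur almost surely, which is the claim. The bound is uniform over all laws on $[0,1]^m$, so the statement holds for every such law.

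There is no real obstacle. The only point requiring care is that the radius $a_N=N^{-1/4}$ makes $Na_N^2=\sqrt N$, so the tail probabilities are summable. Any choice $a_N$ with $Na_N^2/\log N\to\infty$ would also work.
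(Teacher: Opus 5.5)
Your proposal is correct and follows the paper's proof essentially step for step: a Hoeffding bound in each coordinate, a union bound over the $m$ coordinates, then Borel--Cantelli using the summability of $2m\exp(-2\sqrt N)$. You also spell out the inclusion $\{\|\widehat\theta_N-\theta\|_2>\rho_N\}\subseteq\bigcup_j\{|\widehat\theta_{N,j}-\theta_j|>a_N\}$ and why the series converges, both of which the paper leaves implicit.
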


\begin{proof}
For each coordinate, Hoeffding's inequality \citep{hoeffding1963} gives
\[
 \Pp\!\left(
   |\widehat\theta_{N,j}-\theta_j|>a_N
 \right)
 \leq 2\exp(-2Na_N^2)
 =2\exp(-2\sqrt N).
\]
A union bound over the $m$ coordinates yields
\begin{equation}
\label{eq:hoeffding}
 \Pp\!\left(
   \|\widehat\theta_N-\theta\|_2>\rho_N
 \right)
 \leq 2m\exp(-2\sqrt N).
\end{equation}
The right-hand side is summable in $N$. The Borel--Cantelli lemma proves the claim.
\end{proof}

For a closed set $C\subseteq\R^m$, define
\begin{equation}
\label{eq:closed-predictor}
    T_{C,N}=\ind{\dist(\widehat\theta_N,C)\leq\rho_N},
\end{equation}
where $\dist(x,\varnothing)=+\infty$. Since distance to a nonempty closed set is continuous, $T_{C,N}$ is Borel measurable.

\begin{lemma}[Closed-set predictor]
\label{lem:closed}
For every closed $C\subseteq\R^m$,
\[
 T_{C,N}=\ind{\theta\in C}
 \qquad\text{for all sufficiently large $N$, almost surely.}
\]
\end{lemma}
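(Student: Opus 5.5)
We condition on the almost-sure event $E$ supplied by \cref{lem:confidence}. On $E$ there is a (random) index $N_0$ such that $\|\widehat\theta_N-\theta\|_2\le\rho_N$ for all $N\ge N_0$. Everything below is deterministic on $E$, so the conclusion will hold almost surely.

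\emph{Case $\theta\in C$.} In this case $C\neq\varnothing$. For $N\ge N_0$,
\[
\dist(\widehat\theta_N,C)\le\|\widehat\theta_N-\theta\|_2\le\rho_N,
\]
so $T_{C,N}=1=\ind{\theta\in C}$.

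\emph{Case $\theta\notin C$.} If $C=\varnothing$, then $T_{C,N}=0$ for every $N$ by the convention $\dist(x,\varnothing)=+\infty$. Otherwise, since $C$ is closed and $\theta\notin C$, we have $\delta:=\dist(\theta,C)>0$. The distance function is $1$-Lipschitz, so for $N\ge N_0$,
\[
\dist(\widehat\theta_N,C)\ge\delta-\|\widehat\theta_N-\theta\|_2\ge\delta-\rho_N.
\]
Because $\rho_N=\sqrt m\,N^{-1/4}\to0$, there is a deterministic $N_1$ (depending only on $\delta$ and $m$) with $\rho_N<\delta/2$ for all $N\ge N_1$. For $N\ge\max(N_0,N_1)$ we then get $\dist(\widehat\theta_N,C)>\delta/2>\rho_N$, hence $T_{C,N}=0=\ind{\theta\in C}$.

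In both cases $T_{C,N}$ eventually equals $\ind{\theta\in C}$ on $E$, and $\Pp(E)=1$, which proves the lemma. The only substantive point is the second case: there we need the gap $\delta>0$, which uses closedness of $C$, and we need it to dominate the shrinking radius. Since $\delta$ is fixed while $\rho_N\to0$ deterministically, this step causes no difficulty. Measurability was already noted after \eqref{eq:closed-predictor}.
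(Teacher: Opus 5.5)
Your proof is correct and follows the paper's argument essentially line for line. You work on the probability-one event of \cref{lem:confidence}, use $\dist(\widehat\theta_N,C)\le\|\widehat\theta_N-\theta\|_2$ when $\theta\in C$, and when $\theta\notin C$ combine the positive gap $\delta=\dist(\theta,C)$, the bound $\rho_N<\delta/2$, and the reverse triangle inequality; your separate treatment of $C=\varnothing$ is a small extra bit of care that the paper leaves implicit in the convention $\dist(x,\varnothing)=+\infty$.
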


\begin{proof}
Work on the probability-one event in \cref{lem:confidence}. If $\theta\in C$, then
\[
 \dist(\widehat\theta_N,C)
 \leq\|\widehat\theta_N-\theta\|_2
 \leq\rho_N
\]
eventually, so $T_{C,N}=1$. If $\theta\notin C$, closedness gives $\delta=\dist(\theta,C)>0$. Eventually $\rho_N<\delta/2$, and the reverse triangle inequality gives
\[
 \dist(\widehat\theta_N,C)
 \geq\delta-\|\widehat\theta_N-\theta\|_2
 \geq\delta-\rho_N
 >\rho_N.
\]
Thus $T_{C,N}=0$ eventually.
\end{proof}

\subsection{Finite Boolean closure}

Let $C_1,\ldots,C_k$ be closed subsets of $\R^m$ and let $h:\{0,1\}^k\to\{0,1\}$ be arbitrary. Consider
\begin{equation}
\label{eq:boolean-set}
 S_h=\left\{x\in\R^m:
 h\bigl(\ind{x\in C_1},\ldots,\ind{x\in C_k}\bigr)=1
 \right\}.
\end{equation}

\begin{proposition}[Finite Boolean prediction]
\label{prop:boolean}
The predictor
\[
 \Phi_N=h(T_{C_1,N},\ldots,T_{C_k,N})
\]
is eventually almost surely correct for $S_h$ over all laws supported on $[0,1]^m$.
\end{proposition}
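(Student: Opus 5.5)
The plan is to apply \cref{lem:closed} to each of the finitely many closed sets $C_1,\ldots,C_k$ and then intersect the resulting almost-sure events. No further probabilistic estimate should be needed, because the Boolean function $h$ is applied pointwise and has only finitely many inputs.

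Fix a law supported on $[0,1]^m$ with mean $\theta$. For each $j\in\{1,\ldots,k\}$, \cref{lem:closed} provides an event $E_j$ of probability one. On $E_j$ there is a (random) index $N_j$ such that
\[
T_{C_j,N}=\ind{\theta\in C_j}\quad\text{for all } N\geq N_j .
\]
A finite intersection of probability-one events has probability one, so $E=\bigcap_{j=1}^k E_j$ satisfies $\Pp(E)=1$. In fact every $E_j$ may be taken to be the single event of \cref{lem:confidence}, since the proof of \cref{lem:closed} works on that event. Then no intersection is needed at all, and the argument would survive even countably many sets.

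On $E$, set $N_0=\max_j N_j$, which is finite because $k$ is finite. For every $N\geq N_0$ the input vectors agree coordinatewise:
\[
(T_{C_1,N},\ldots,T_{C_k,N})=(\ind{\theta\in C_1},\ldots,\ind{\theta\in C_k}).
\]
Since $h$ is a fixed function on $\{0,1\}^k$, applying it to both sides gives
\[
\Phi_N=h\bigl(\ind{\theta\in C_1},\ldots,\ind{\theta\in C_k}\bigr)=\ind{\theta\in S_h},
\]
where the last equality is the definition \eqref{eq:boolean-set}. This is exactly \eqref{eq:eas}.

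Measurability follows because each $T_{C_j,N}$ is Borel and $h$ is a function on a finite set. Hence $\Phi_N$ is a finite Boolean combination of Borel indicators and is itself Borel. The empty case $C_j=\varnothing$ is covered by the convention $\dist(\cdot,\varnothing)=+\infty$, under which $T_{C_j,N}\equiv 0$.

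There is essentially no obstacle. The only point requiring care is that the stabilization time $N_0$ must be finite, and this rests on $k$ being finite, or alternatively on using the single common event of \cref{lem:confidence}.
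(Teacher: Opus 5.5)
Your proof is correct and is the paper's argument: apply \cref{lem:closed} to each of the finitely many $C_j$, intersect the probability-one stabilization events, and apply the fixed map $h$ to the eventually constant bit vector. One aside is wrong, although the proof does not depend on it: the common event of \cref{lem:confidence} does not let the argument ``survive even countably many sets,'' and it is not an alternative reason for $N_0<\infty$, because on that event each bit stabilizes only once $\rho_N<\dist(\theta,C_j)/2$, and these times can be unbounded in $j$ (e.g.\ $C_j=\{x:\|x-\theta_0\|_2\geq 1/j\}$ with $h$ the logical OR and a law on $[0,1]^m$ with mean $\theta_0$ gives $\Phi_N\equiv 1$ although $\theta_0\notin\bigcup_j C_j$), so finiteness of $k$ is genuinely what makes $N_0$ finite.
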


\begin{proof}
By \cref{lem:closed}, each of the finitely many bits $T_{C_j,N}$ stabilizes almost surely to $\ind{\theta\in C_j}$. The intersection of these finitely many probability-one stabilization events still has probability one. Applying the fixed Boolean map $h$ proves the result.
\end{proof}

This proposition is a self-contained version, for bounded mean parameters, of the closed-set theorem and finite-Boolean-closure observation in \citet{wu2021prediction}. It also explains why a set need not be closed or open to be predictable.

\section{Real algebraic geometry of diagonalizability}
\label{sec:algebraic}

We now show that the relevant matrix property has exactly the finite Boolean structure required by \cref{prop:boolean}.

\begin{proposition}[Closed sign representation]
\label{prop:closed-sign}
Every semialgebraic set $S\subseteq\R^m$ is a finite Boolean combination of closed sets of the forms
\[
    \{x:q(x)=0\},\qquad
    \{x:q(x)\geq0\},\qquad
    \{x:q(x)\leq0\},
\]
where $q$ is a real polynomial.
\end{proposition}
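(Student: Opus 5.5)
By the definition recalled in \cref{sec:model}, $S$ is built from finitely many atomic sets $\{x:p(x)=0\}$, $\{x:p(x)>0\}$, $\{x:p(x)\geq0\}$ (and their variants with $<$, $\leq$, $\neq$) by finitely many unions, intersections and complements. The plan is to rewrite each atomic set as a Boolean combination of closed sets of the three stated forms, and then use that Boolean combinations of Boolean combinations are again Boolean combinations.

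\textbf{Step 1: atoms.} I would rewrite the atoms one at a time. Equalities $\{p=0\}$ and non-strict inequalities $\{p\geq0\}$ and $\{p\leq0\}$ are already of the required closed form, since polynomials are continuous. The strict inequality satisfies $\{p>0\}=\R^m\setminus\{p\leq0\}$, the complement of a closed set of the form $\{q\leq0\}$ with $q=p$. Likewise $\{p<0\}=\R^m\setminus\{p\geq0\}$ and $\{p\neq0\}=\R^m\setminus\{p=0\}$.

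\textbf{Step 2: induction on the construction.} I would argue by structural induction on the finite expression defining $S$. Suppose two sets are written as $h_1(\ind{x\in C_1},\ldots,\ind{x\in C_k})=1$ and $h_2(\ind{x\in C'_1},\ldots,\ind{x\in C'_l})=1$, with all $C_j$ and $C'_j$ closed sets of the allowed forms. Pool the closed sets into one list of length $k+l$. Then the union, intersection and complement are given by the Boolean maps $h_1\vee h_2$, $h_1\wedge h_2$ and $1-h_1$ applied to the pooled indicator vector. This exhibits $S$ in the form \eqref{eq:boolean-set} with finitely many closed sets $C_j$.

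\textbf{Main obstacle.} There is no genuine difficulty here. The only point needing care is the definitional one: the definition of semialgebraic must be taken as the finite Boolean generation from polynomial sign conditions, or equivalently as quantifier-free definability, as recalled in the paper. If one instead starts from the standard normal form $\bigcup_i\bigcap_j\{p_{ij}\,\ast_{ij}\,0\}$ with $\ast_{ij}\in\{=,>\}$, Step 1 alone already suffices, because $\{p>0\}$ is the complement of $\{p\leq0\}$. Any quantifiers, as will arise for diagonalizability, are eliminated beforehand by Tarski--Seidenberg; they are not part of this proposition.
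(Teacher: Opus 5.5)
Your proposal is correct and follows the paper's proof. Both rewrite each strict sign condition as the complement of a closed weak-inequality set and then substitute into the finite quantifier-free formula; your structural induction over the pooled list of closed sets just spells out that substitution step.
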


\begin{proof}
By definition, $S$ has a quantifier-free formula involving finitely many polynomial sign conditions. Equalities and weak inequalities define closed sets. A strict condition $q>0$ is the complement of the closed set $\{q\leq0\}$, and $q<0$ is the complement of $\{q\geq0\}$. Substitution in the finite formula gives the claimed Boolean representation.
\end{proof}

We use the Tarski--Seidenberg projection theorem \citep{tarski1951,seidenberg1954,bochnak1998}: the image of a semialgebraic subset of $\R^{m+k}$ under a coordinate projection onto $\R^m$ is semialgebraic. Equivalently, existentially quantified systems of real polynomial equalities and inequalities admit quantifier elimination.

\begin{proposition}[Diagonalizable loci are semialgebraic]
\label{prop:diag-semialgebraic}
For every fixed $d$, both $\cD_{\R,d}$ and $\cD_{\C,d}$ are semialgebraic subsets of $\R^{d\times d}$.
\end{proposition}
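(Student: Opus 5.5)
The plan is to write each locus as the projection of a semialgebraic set in a larger real space and then invoke the Tarski--Seidenberg projection theorem. For $\cD_{\R,d}$, the definition gives it directly: $M\in\cD_{\R,d}$ iff there exist $P\in\R^{d\times d}$ and $\lambda\in\R^d$ with
\[
 \det P\neq 0,\qquad MP=P\,\diag(\lambda_1,\ldots,\lambda_d).
\]
Each matrix equation is a finite list of polynomial equalities in the entries of $(M,P,\lambda)$, and $\det P\neq0$ is the complement of a polynomial zero set. Hence the set of triples $(M,P,\lambda)\in\R^{d^2+d^2+d}$ satisfying these conditions is semialgebraic. Its coordinate projection onto the $M$-coordinates is exactly $\cD_{\R,d}$, so Tarski--Seidenberg makes $\cD_{\R,d}$ semialgebraic.

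For $\cD_{\C,d}$ I would use the same idea, realified. Write a complex matrix $P=A+iB$ with $A,B\in\R^{d\times d}$, and a complex vector $\lambda=\alpha+i\beta$ with $\alpha,\beta\in\R^d$. Since $M$ is real, the equation $MP=P\diag(\lambda)$ splits into real and imaginary parts:
\[
 MA=A\diag(\alpha)-B\diag(\beta),\qquad MB=A\diag(\beta)+B\diag(\alpha).
\]
Both are polynomial equalities in the real variables. Invertibility of $P$ is the condition $\det(A+iB)\neq0$. Expanding the determinant gives $\det(A+iB)=p(A,B)+i\,q(A,B)$ with real polynomials $p,q$, so the condition becomes $p^2+q^2\neq0$, a single real polynomial inequation.

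Collecting these conditions defines a semialgebraic subset of $\R^{d^2}\times\R^{2d^2}\times\R^{2d}$. Its projection onto the $M$-coordinates is $\cD_{\C,d}$, and a second application of Tarski--Seidenberg finishes the proof.

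I expect the only step needing care to be this realification. One must check that diagonalizability over $\C$ of a real matrix is faithfully captured by the existence of complex $P$ and $\lambda$, with no realness constraint on the eigenvalues. One must also check that the determinant condition is genuinely polynomial in the real and imaginary parts. The alternative $\det\begin{pmatrix}A&-B\\B&A\end{pmatrix}=|\det(A+iB)|^2$ gives the same inequation and could be cited instead.

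Finally, I would note the standard fact that the image of a semialgebraic set under any polynomial map is again semialgebraic, which covers the projections used here. The rest is routine.
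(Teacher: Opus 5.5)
Your proposal is correct and follows essentially the same route as the paper. Both arguments use the existential change-of-basis certificate $MP=P\diag(\lambda)$ with $P$ invertible, realify it over $\C$ via $P=A+iB$ and $\lambda=\alpha+i\beta$ with invertibility expressed as $(\Re\det P)^2+(\Im\det P)^2\neq0$, and then apply Tarski--Seidenberg projection. The only differences are cosmetic: you write $\det P\neq0$ where the paper writes $(\det P)^2>0$, and these define the same set.
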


\begin{proof}
For real diagonalizability, a real matrix $A$ belongs to $\cD_{\R,d}$ if and only if there exist $P\in\R^{d\times d}$ and $\lambda\in\R^d$ satisfying
\begin{equation}
\label{eq:real-certificate}
    AP=P\diag(\lambda),
    \qquad
    (\det P)^2>0.
\end{equation}
Every scalar entry of the matrix equation is a polynomial equality in the entries of $A,P,\lambda$, and the second condition is a strict polynomial inequality. Hence the set of triples $(A,P,\lambda)$ satisfying \cref{eq:real-certificate} is semialgebraic. Its projection onto the $A$-coordinates is exactly $\cD_{\R,d}$, so Tarski--Seidenberg proves the real claim.

For complex diagonalizability, write a prospective change-of-basis matrix and eigenvalue vector as
\[
    P=U+iV,
    \qquad
    \lambda=\alpha+i\beta,
\]
with $U,V\in\R^{d\times d}$ and $\alpha,\beta\in\R^d$. The equation $AP=P\diag(\lambda)$ is equivalent to the real polynomial matrix equations
\begin{align}
\label{eq:complex-real}
 AU&=U\diag(\alpha)-V\diag(\beta),\\
\label{eq:complex-imag}
 AV&=U\diag(\beta)+V\diag(\alpha).
\end{align}
Moreover, the real and imaginary parts of $\det(U+iV)$ are real polynomials in the entries of $U,V$, and invertibility is equivalent to
\begin{equation}
\label{eq:complex-det}
 \bigl(\Re\det(U+iV)\bigr)^2
 +\bigl(\Im\det(U+iV)\bigr)^2>0.
\end{equation}
Thus the tuples $(A,U,V,\alpha,\beta)$ satisfying \cref{eq:complex-real,eq:complex-imag,eq:complex-det} form a semialgebraic set whose projection onto $A$ is precisely $\cD_{\C,d}$. A second application of Tarski--Seidenberg completes the proof.
\end{proof}

\begin{proof}[Proof of \cref{thm:semialgebraic}(a)]
Apply \cref{prop:closed-sign} to write $S$ as a finite Boolean combination of closed polynomial sign sets, and then apply \cref{prop:boolean}.
\end{proof}

\begin{proof}[Proof of \cref{thm:diagonalizable}]
Vectorize the random matrices, so $m=d^2$ and $M$ is the bounded mean parameter. By \cref{prop:diag-semialgebraic}, the set $\cD_{\F,d}$ is semialgebraic for either $\F=\R$ or $\F=\C$. The claim follows from \cref{thm:semialgebraic}(a). Independence among the $d^2$ entries of one $X_i$ is never used: \cref{eq:hoeffding} requires only independence across the sample index $i$ for each fixed coordinate.
\end{proof}

\subsection{An explicit polynomial-sign predictor}
\label{sec:explicit}

The geometric predictor in \cref{eq:closed-predictor} is already explicit once a Boolean representation is known. There is also a version that avoids distance optimization. Quantifier elimination produces finitely many polynomials $q_1,\ldots,q_k$ and a Boolean function of their signs that represents $S$. On the cube $[0,1]^m$, let $L_j$ be any Lipschitz constant for $q_j$ with respect to $\|\cdot\|_\infty$. If
\[
 q_j(x)=\sum_\gamma c_{j,\gamma}x^\gamma,
\]
one valid choice is
\begin{equation}
\label{eq:lipschitz}
 L_j=\sum_\gamma |c_{j,\gamma}|\,|\gamma|_1.
\end{equation}
Define the estimated sign
\begin{equation}
\label{eq:sign-estimator}
 \widehat s_{j,N}=
 \begin{cases}
  +1,&q_j(\widehat\theta_N)> (L_j+1)a_N,\\
  -1,&q_j(\widehat\theta_N)<-(L_j+1)a_N,\\
  0,&|q_j(\widehat\theta_N)|\leq (L_j+1)a_N.
 \end{cases}
\end{equation}
On the event $\|\widehat\theta_N-\theta\|_\infty\leq a_N$, the Lipschitz inequality controls the polynomial error by $L_ja_N$. If $q_j(\theta)=0$, \cref{eq:sign-estimator} is eventually zero; if $q_j(\theta)\neq0$, the fixed nonzero margin dominates the shrinking threshold, and the correct strict sign is eventually returned. Hence every $\widehat s_{j,N}$ stabilizes to $\sgn q_j(\theta)$ almost surely. Evaluating the quantifier-free Boolean formula on these estimated signs gives another proof of \cref{thm:semialgebraic}(a).

Tarski's decision method makes this construction effective for fixed $d$ when the input polynomial systems have rational coefficients \citep{tarski1951,basu2006}. The claim is effectivity, not efficiency: general quantifier elimination and semialgebraic optimization can be very expensive as $d$ grows.

\section{Beyond bounded observations: a moment dichotomy}
\label{sec:moments}

The bounded Bernoulli model in \cref{prob:ws} is covered by Hoeffding concentration. The same algebraic argument survives for unbounded observations as soon as one has any fixed moment strictly above one.

\begin{lemma}[A deterministic mean rate under an $r$th moment]
\label{lem:mz-rate}
Fix $r>1$ and suppose $\E\|Y_1\|_2^r<\infty$. Choose
\[
 1<s<\min\{r,2\}
 \qquad\text{and}\qquad
 0<\gamma<1-\frac1s.
\]
Then
\[
 \|\widehat\theta_N-\theta\|_2\leq\sqrt m\,N^{-\gamma}
 \qquad\text{for all sufficiently large $N$, almost surely.}
\]
\end{lemma}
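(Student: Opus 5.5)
Since the target norm bound $\sqrt m\,N^{-\gamma}$ mirrors the coordinatewise union bound of \cref{lem:confidence}, the plan is to reduce to a single coordinate and invoke the Marcinkiewicz--Zygmund strong law of large numbers. Fix a coordinate $j$ and set $Z_i=Y_{i,j}-\theta_j$. These are i.i.d. and mean zero, since $r>1$ ensures integrability. They also satisfy $\E|Z_1|^s<\infty$: indeed $|Y_{1,j}|\le\|Y_1\|_2$ and $s<r$, so $\E|Y_{1,j}|^s\le 1+\E\|Y_1\|_2^r<\infty$, and subtracting the constant $\theta_j$ preserves finiteness of the $s$th moment. The Marcinkiewicz--Zygmund strong law, for $1\le s<2$ and mean-zero i.i.d. summands with finite $s$th absolute moment, gives
\[
 N^{-1/s}\sum_{i=1}^N Z_i\longrightarrow0\qquad\text{almost surely}.
\]
Equivalently, $N^{1-1/s}\,|\widehat\theta_{N,j}-\theta_j|\to0$ almost surely. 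We need $s\ge1$ for the mean-zero centering to be the correct normalization, and this holds because $s>1$.

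Next we convert this limit into the rate $N^{-\gamma}$. Because $\gamma<1-1/s$, we can write
\[
 N^{\gamma}|\widehat\theta_{N,j}-\theta_j| = N^{\gamma-(1-1/s)}\cdot N^{1-1/s}|\widehat\theta_{N,j}-\theta_j|.
\]
The first factor tends to zero and the second tends to zero almost surely, so the product is at most $1$ for all sufficiently large $N$. Hence $|\widehat\theta_{N,j}-\theta_j|\le N^{-\gamma}$ eventually, almost surely. In fact $\gamma\le 1-1/s$ would already suffice here, since the second factor alone tends to zero.

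Finally, we intersect the $m$ probability-one events, one per coordinate. On this intersection every coordinate error is at most $N^{-\gamma}$ for large $N$, which gives $\|\widehat\theta_N-\theta\|_2\le\sqrt m\,\max_j|\widehat\theta_{N,j}-\theta_j|\le\sqrt m\,N^{-\gamma}$ eventually. As in the bounded case, independence among the coordinates of a single $Y_i$ is never used.

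The only substantive step is invoking the Marcinkiewicz--Zygmund law with the correct hypotheses. The case $s<2$ is needed because the classical statement is for $0<s<2$, and this is why $s<\min\{r,2\}$ is imposed. The moment reduction from $\|Y_1\|_2^r$ to $|Z_1|^s$ is routine, so I expect no real obstacle beyond citing the theorem precisely.
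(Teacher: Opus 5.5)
Your proof is correct and follows the paper's argument essentially step for step. Both apply the Marcinkiewicz--Zygmund strong law coordinatewise, convert $o(N^{-(1-1/s)})$ into the bound $N^{-\gamma}$, and intersect the $m$ probability-one events; you additionally spell out the moment reduction $\E|Y_{1,j}|^s\le 1+\E\|Y_1\|_2^r$, which the paper leaves implicit.
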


\begin{proof}
For each coordinate $j$, the Marcinkiewicz--Zygmund strong law \citep{marcinkiewicz1937} gives
\[
 N^{-1/s}\sum_{i=1}^N(Y_{i,j}-\theta_j)\longrightarrow0
 \qquad\text{almost surely},
\]
because the centered coordinate has a finite $s$th moment and $1<s<2$. Consequently,
\[
 |\widehat\theta_{N,j}-\theta_j|
 =o\!\left(N^{-(1-1/s)}\right)
 \leq N^{-\gamma}
\]
eventually almost surely. There are only finitely many coordinates, so the coordinatewise events hold simultaneously; the Euclidean bound follows.
\end{proof}

\begin{proof}[Proof of \cref{thm:semialgebraic}(b)]
Use the closed-set predictor \cref{eq:closed-predictor} with radius $\rho_N=\sqrt m\,N^{-\gamma}$, where $s$ and $\gamma$ are chosen as in \cref{lem:mz-rate}. The proof of \cref{lem:closed,prop:boolean} applies verbatim on the probability-one event in \cref{lem:mz-rate}. Finally use \cref{prop:closed-sign}.
\end{proof}

For completeness, we record the obstruction at the endpoint $r=1$. For $S\subseteq\R^m$, let $\cP_r(S)$ be the class of i.i.d. laws with finite $r$th moment and mean in $S$. A theorem of \citet{dembo1994} states, in particular, that for disjoint mean sets $A,B\subseteq\R^m$:
\begin{itemize}
    \item if $r>1$, the two classes $\cP_r(A)$ and $\cP_r(B)$ are \eas-distinguishable exactly when $A$ and $B$ are contained in disjoint $F_\sigma$ sets;
    \item if $r=1$, they are \eas-distinguishable exactly when $A$ and $B$ are contained in disjoint open sets.
\end{itemize}
Semialgebraic sets and their complements are $F_\sigma$, so the first part is consistent with, and more generally contextualizes, our direct positive proof.

\begin{corollary}[Sharp endpoint failure]
\label{cor:first-moment}
Fix $d\geq2$ and $\F\in\{\R,\C\}$. Over the class of all i.i.d. real $d\times d$ matrix laws satisfying only $\E\|X_1\|_2<\infty$, diagonalizability of $\E X_1$ over $\F$ is not \eas-predictable.
\end{corollary}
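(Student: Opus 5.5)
We reduce the corollary to the $r=1$ case of the Dembo--Peres criterion. Predicting membership of $\E X_1$ in $\cD_{\F,d}$ eventually almost surely over all integrable laws means exactly that the two classes $\cP_1(\cD_{\F,d})$ and $\cP_1(\R^{d\times d}\setminus\cD_{\F,d})$ are \eas-distinguishable. By the criterion, this forces $\cD_{\F,d}$ and its complement to lie in disjoint open sets $U\supseteq\cD_{\F,d}$ and $V\supseteq\cD_{\F,d}^c$. These open sets cover $\R^{d\times d}$ and are disjoint, so $U=\cD_{\F,d}$ and $V=\cD_{\F,d}^c$. Hence $\cD_{\F,d}$ would be simultaneously open and closed. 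Because $\R^{d\times d}$ is connected, it would then have to be either empty or everything. It suffices, therefore, to show that $\cD_{\F,d}$ is a nonempty proper subset. A sharper version: it suffices to show it is not open, or not closed.

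The witnesses from \cref{sec:introduction} settle this for $d=2$, and we embed them block-diagonally for larger $d$. Let
\[
 N_t=\begin{pmatrix}0&t\\0&0\end{pmatrix}\oplus I_{d-2}, \qquad t\ge 0.
\]
For $t\neq0$, the matrix $N_t$ has the eigenvalue $0$ with algebraic multiplicity $2$ but geometric multiplicity $1$. It is therefore defective over both $\R$ and $\C$, since diagonalizability over $\C$ is determined by the Jordan form. For $t=0$, the matrix $N_0=0\oplus I_{d-2}$ is diagonal, so it lies in $\cD_{\R,d}\subseteq\cD_{\C,d}$. Thus every neighbourhood of the diagonalizable point $N_0$ contains non-diagonalizable matrices, and $\cD_{\F,d}$ is not open for either field. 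This alone contradicts the clopen conclusion. The companion example $A_\varepsilon\oplus I_{d-2}$ would show non-closedness as well, but it is not needed. The requirement $d\ge2$ enters exactly here: for $d=1$, every matrix is diagonalizable.

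The step needing the most care is the translation of the prediction problem into the Dembo--Peres distinguishability framework, because their theorem is stated for hypothesis classes rather than set-membership predictors. The point to verify is that a predictor $\Phi_N$ satisfying \cref{eq:eas} for every integrable law is literally a discrimination rule between $\cP_1(A)$ and $\cP_1(B)$ with $A=\cD_{\F,d}$ and $B=A^c$. This holds after identifying $\R^{d\times d}$ with $\R^{d^2}$ and noting that all norms are equivalent, so the moment condition $\E\|X_1\|_2<\infty$ matches theirs. Their negative direction then applies directly. The remaining ingredients are elementary: connectedness of Euclidean space and the single explicit Jordan-block family.
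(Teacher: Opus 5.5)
Your proposal is correct and follows the paper's route: translate the predictor into \eas-distinguishability of $\cP_1(\cD_{\F,d})$ and $\cP_1(\cD_{\F,d}^c)$, invoke the $r=1$ Dembo--Peres criterion, and derive a topological contradiction. The only cosmetic difference is how you close the argument. You exhibit the family $N_t\to N_0$ to show that $\cD_{\F,d}$ is not open, which makes connectedness unnecessary. The paper instead uses connectedness of $\R^{d^2}$ together with the nonemptiness of $\cD_{\F,d}$ and of its complement.
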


\begin{proof}
The mean parameter ranges over the connected space $\R^{d^2}$. Both $\cD_{\F,d}$ and its complement are nonempty: the zero matrix is diagonalizable, while a matrix containing the $2\times2$ Jordan block
\[
 J=\begin{pmatrix}0&1\\0&0\end{pmatrix}
\]
as a direct summand is not diagonalizable over either field. If $\cD_{\F,d}$ and its complement were contained in disjoint open sets $U,V$, then $U$ and $V$ would be nonempty and would cover $\R^{d^2}$, contradicting connectedness. The $r=1$ part of the Dembo--Peres criterion therefore rules out an \eas-predictor.
\end{proof}

There is no contradiction between \cref{thm:diagonalizable} and \cref{cor:first-moment}: bounded support supplies uniform tail control far stronger than mere integrability.

\section{The two-by-two case and interpretive limits}
\label{sec:example}

The smallest nontrivial dimension makes the Boolean structure transparent. Let
\[
 A=\begin{pmatrix}a&b\\c&d\end{pmatrix},
 \qquad
 \Delta(A)=(\operatorname{tr}A)^2-4\det A=(a-d)^2+4bc.
\]
Over $\C$, the matrix is diagonalizable if it has two distinct eigenvalues, or if its repeated-eigenvalue case is already scalar. Hence
\begin{equation}
\label{eq:2x2-complex}
 A\in\cD_{\C,2}
 \quad\Longleftrightarrow\quad
 \Delta(A)\neq0
 \ \text{or}\ 
 (a=d,\ b=0,\ c=0).
\end{equation}
Over $\R$, distinct eigenvalues must be real, so
\begin{equation}
\label{eq:2x2-real}
 A\in\cD_{\R,2}
 \quad\Longleftrightarrow\quad
 \Delta(A)>0
 \ \text{or}\ 
 (a=d,\ b=0,\ c=0).
\end{equation}
These formulas are already finite Boolean combinations of closed polynomial sign sets. In dimension two, no quantifier elimination is needed: the predictor of \cref{sec:explicit} can directly estimate the signs and zero status of $\Delta$, $a-d$, $b$, and $c$.

Several limitations are worth making explicit.
\begin{enumerate}[(i)]
    \item The dimension $d$ is fixed. The construction is not a dimension-uniform sample-complexity result.
    \item Eventual correctness is pointwise in the underlying law. The last error time may have no known deterministic bound, especially near the algebraic boundary.
    \item We establish predictability, not \eas-learnability in the stronger sense of a universally valid stopping rule certifying that no further errors will occur.
    \item Quantifier elimination provides a finite formula and a Borel predictor, but the general algorithm is not asserted to be computationally practical.
\end{enumerate}

The positive mechanism nevertheless applies unchanged to every fixed-dimensional property of a mean matrix expressible by a first-order formula with polynomial equalities and inequalities. Examples include prescribed rank conditions, existence of a real invariant subspace of fixed dimension, and polynomial Lyapunov-certificate properties. Each application still requires care about the intended field and quantifiers, but no new statistical argument.

\section{Conclusion}
\label{sec:conclusion}

Diagonalizability of an unknown mean matrix is eventually almost surely predictable in the Wu--Santhanam binary-matrix model. The proof is a synthesis of two facts: semialgebraic mean properties are finite Boolean combinations of closed tests, and diagonalizability over either $\R$ or $\C$ is semialgebraic by Tarski--Seidenberg projection. A shrinking confidence region makes every closed test eventually correct, including at its boundary, and finitely many stabilized bits can be combined without error.

The argument yields more than the requested resolution. It covers all fixed-dimensional first-order real-algebraic properties of bounded mean parameters and remains valid under any fixed moment assumption of order strictly greater than one. In contrast, the Dembo--Peres criterion rules out diagonalizability prediction over the unrestricted class of merely integrable matrix laws in dimension at least two. The remaining questions are primarily quantitative and computational: finding small quantifier-free certificates for structured matrix properties, deriving useful finite-sample error profiles away from algebraic boundaries, and identifying subclasses for which the predictor can be implemented efficiently.

\section{Disclosure}
\label{sec:disclosure}
The proof strategy and counterexample were produced by OpenAI’s GPT-5.6 Sol Ultra through Codex in
response to prompts from the author. Codex was also used to revise
the exposition and prepare the LaTeX manuscript. The author selected the
problem, directed the interactions and revisions, and is the sole named author. The AI system is acknowledged as a reasoning and writing tool, not
as an author. This disclosure is not a substitute for independent expert
mathematical review.
\bibliographystyle{plainnat}
\bibliography{references}

\appendix
\section{Alternative algebraic certificates}
\label{app:certificates}

The change-of-basis certificates in \cref{prop:diag-semialgebraic} are direct and work uniformly over $\R$ and $\C$. A second description uses annihilating polynomials and may be useful for symbolic implementations.

\begin{proposition}
\label{prop:minpoly}
A real matrix $A$ is diagonalizable over $\C$ if and only if there exist an integer $1\leq k\leq d$ and a monic real polynomial
\[
 q(t)=t^k+c_{k-1}t^{k-1}+\cdots+c_0
\]
such that $q(A)=0$ and $q$ is square-free. Equivalently, the resultant $\operatorname{Res}(q,q')$ is nonzero.
\end{proposition}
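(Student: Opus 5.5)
The plan is to reduce everything to the classical criterion that a matrix over an algebraically closed field is diagonalizable exactly when its minimal polynomial splits with distinct roots, i.e.\ is square-free.

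\emph{Forward direction.} Suppose $A\in\R^{d\times d}$ is diagonalizable over $\C$. I take $q=\mu_A$, the minimal polynomial. It is monic and has real coefficients, since it is the minimal polynomial over $\R$ as well; indeed the minimal polynomial does not change under field extension, because the linear dependencies among $I,A,A^2,\ldots$ are determined by a real linear system. Its degree $k$ satisfies $1\le k\le d$ by Cayley--Hamilton. If $A=P\diag(\lambda)P^{-1}$, then $\prod_{\mu\in\{\lambda_1,\ldots,\lambda_d\}}(t-\mu)$, the product over the distinct eigenvalues, annihilates $A$. Hence $\mu_A$ divides a polynomial with distinct roots, so $\mu_A$ is square-free.

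\emph{Reverse direction.} Suppose a square-free monic real $q$ with $q(A)=0$ is given. Factor $q=\prod_{j=1}^k(t-\mu_j)$ over $\C$ with the $\mu_j$ distinct. Since the factors are pairwise coprime, the kernel decomposition lemma (via B\'ezout, or via Lagrange interpolation idempotents $E_j=\prod_{i\ne j}(A-\mu_i)/(\mu_j-\mu_i)$, which sum to $I$ and have range in $\ker(A-\mu_j)$) gives
\[
 \C^d=\bigoplus_j\ker(A-\mu_j I).
\]
Thus $\C^d$ is a direct sum of eigenspaces, and $A$ is diagonalizable over $\C$. This is the step carrying the real content. I would present it through the idempotents, which avoids invoking Jordan form.

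\emph{Resultant equivalence.} For monic $q$, $\operatorname{Res}(q,q')=\prod_j q'(\mu_j)$ up to sign, where the $\mu_j$ run over the roots with multiplicity. Hence the resultant vanishes iff some root is a root of $q'$, i.e.\ a multiple root; in characteristic zero this is exactly failure of square-freeness. I will note that the quantifier over $k$ is a finite disjunction and the $c_i$ range over $\R$, so the proposition gives another existential polynomial certificate. That certificate is $q(A)=0$ entrywise together with $\operatorname{Res}(q,q')^2>0$, which is compatible with Tarski--Seidenberg as in \cref{prop:diag-semialgebraic}.

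The main obstacle is only bookkeeping: making sure realness of $q$ is justified in the forward direction. The minimal-polynomial-invariance remark handles that. Alternatively, one can take the product over distinct eigenvalues directly and observe that it is conjugation-invariant, because nonreal eigenvalues of a real matrix come in conjugate pairs.
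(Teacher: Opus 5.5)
Your proposal is correct and follows the paper's route. The minimal polynomial serves as the square-free real annihilator in the forward direction, the classical criterion (a square-free annihilating polynomial forces diagonalizability over $\C$) gives the converse, and the resultant is the standard repeated-root certificate. The only differences are that you prove that criterion inline via Lagrange idempotents and justify realness of $\mu_A$ by field-extension invariance, whereas the paper notes that $\mu_A$ divides $q$ and cites the criterion.
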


\begin{proof}
If $A$ is diagonalizable over $\C$, its minimal polynomial is the product of the distinct linear factors corresponding to its distinct eigenvalues. Because $A$ is real, this polynomial has real coefficients; it is square-free and annihilates $A$. Conversely, if a square-free polynomial $q$ annihilates $A$, then the minimal polynomial of $A$ divides $q$ and is itself square-free. A complex matrix is diagonalizable exactly when its minimal polynomial is square-free. The resultant condition is the standard polynomial certificate that $q$ and $q'$ have no common root.
\end{proof}

For each fixed $k$, the equations $q(A)=0$ and the inequality $\operatorname{Res}(q,q')^2>0$ are polynomial conditions in $A,c_0,\ldots,c_{k-1}$. Taking a finite union over $k$ and projecting out the coefficients gives another Tarski--Seidenberg proof for $\cD_{\C,d}$.

Over $\R$, one may instead require distinct real numbers $\lambda_1,\ldots,\lambda_k$ such that
\[
 \prod_{j=1}^k(A-\lambda_jI)=0,
 \qquad
 \prod_{1\leq i<j\leq k}(\lambda_i-\lambda_j)^2>0.
\]
The minimal polynomial then splits into distinct real linear factors, which is equivalent to real diagonalizability.

\section{Timing convention and measurability}
\label{app:timing}

The predictor in the main text uses $N$ observations and labels its output $\Phi_N$. In the protocol of \citet{wu2021prediction}, the prediction at time $n$ is made before $X_n$ is revealed and may use $X_1,\ldots,X_{n-1}$. Define their time-$n$ rule to be our $\Phi_{n-1}$ for $n\geq2$, with an arbitrary initial output at $n=1$. This changes at most one error and therefore preserves \eas-predictability.

Distance to a nonempty closed subset of Euclidean space is continuous. Thus the tests in \cref{eq:closed-predictor} are Borel. The explicit sign tests in \cref{eq:sign-estimator} are also Borel because polynomials are continuous. No measurable-selection argument is required.

\section{A compact proof through the original framework}
\label{app:compact}

There is a short alternative proof of the bounded result using only established components of \citet{wu2021prediction} and the algebraic proposition above. Their Theorem 6 states that a closed subset of the bounded mean-parameter cube is \eas-predictable. Immediately afterward, they observe that applying any fixed Boolean function to finitely many \eas-predictable indicators preserves \eas-predictability. By \cref{prop:closed-sign}, every semialgebraic set is a finite Boolean combination of closed polynomial sign sets. By \cref{prop:diag-semialgebraic}, diagonalizability over either field is semialgebraic. The result follows. Sections~\ref{sec:prediction} and \ref{sec:explicit} supply a direct predictor and verify its boundary behavior rather than treating the closed-set theorem as a black box.

\end{document}